\newcommand{\R}{\mathbb{R}}
\newcommand{\Z}{\mathbb{Z}}
\newcommand{\N}{\mathbb{N}}
\DeclareMathOperator{\lcm}{lcm}
\newtheorem{theorem}{Theorem}[section]
\newtheorem{corollary}{Corollary}[theorem]
\newtheorem{lemma}[theorem]{Lemma}
\title{Compositions with 3 Pairwise Coprime Parts\footnote{This work will also appear in  the author's
doctoral thesis at Drexel University.}}
\author{James Thomas}
\date{}
\begin{document}
	\maketitle
	\begin{abstract}
		How many ways can we write $n$ as a sum of $3$ positive integers, no pair of which share a common factor? We express this quantity in terms of the number of solutions to a certain class of linear Diophantine equations. This allows us to show that there are
			$$
			\prod_{p \mid n} \left( 1- \frac{1}{p^2} \right) \prod_{q \nmid n} \left( 1- \frac{3}{q^2} \right) \frac{n^2}{2} + O(n^{3/2+o(1)})
			$$
		such compositions, where the products are over primes that respectively do and don't divide $n$. This strengthens the previous result of Bubbolini, Luca, and Spiga \cite{Luca}
	\end{abstract}

\section{Introduction}
	How many ways can we write $n$ as the sum of $k$ positive integers, no pair of which has a common factor? Formally, let $\textbf{T}_k(n)$ be the set of $k$-compositions $t=(t_1,...,t_k)$ such that
		\begin{align*}
		t_1 + ... t_k &= n
		\\
		\gcd(t_i,t_j) &= 1 \quad \forall i\not= j
		\end{align*}
	Let $T_k(n) := |\textbf{T}_k(n)|$. This problem can be solved exactly for the case $k=2$, where
		\begin{align*}
		\textbf{T}_2(n) &= \{(t,n-t) \ \mid  \  \gcd(t,n) = 1, \ 1\leq t \leq n-1 \}
		\\
		T_2(n) &= \varphi(n) = n\prod_{p\mid n}\left( 1 - 1/p \right)
		\end{align*}
	where $\varphi(n)$ is the Euler totient function. For $k\geq 3$ no exact formula is known although asymptotic results exist. The authors of \cite{Luca} showed that there is an arithmetic function  $f_k(n)$  such that if $n\geq e^{k2^{k+2}}$ then
		\begin{equation} \label{lucaresult}
		\Big|T_k(n) - f_k(n) \frac{n^{k-1}}{(k-1)!}\Big| \leq \frac{707n^{k-1}}{\log n}
		\end{equation}
	In this paper we sharpen the bound in the case $k=3$ using a new method. For the rest of this paper, let $\textbf{T}:= \textbf{T}_3(n)$. If $\textbf{X}$ is a set then let $X:=|\textbf{X}|$. Also let
		$$
		f(n) := f_3(n) = \prod_{p \mid n} \left( 1- \frac{1}{p^2} \right) \prod_{q \nmid n} \left( 1- \frac{3}{q^2} \right) 
		$$
	Our main result is
		\begin{theorem}\label{maintheorem}
		\begin{equation*}
		T = f(n)\frac{n^2}{2} + O(n^{3/2+o(1)})
		\end{equation*}
		\end{theorem}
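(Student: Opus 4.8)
The plan is to strip off the three coprimality conditions by Möbius inversion, which turns $T$ into a weighted count of solutions of three-variable linear Diophantine equations, and then to estimate those counts with an error term strong enough to survive summation. Writing $[\gcd(a,b)=1]=\sum_{e\mid\gcd(a,b)}\mu(e)$ for each of the pairs $\{t_1,t_2\},\{t_1,t_3\},\{t_2,t_3\}$ and interchanging the finitely many nonzero sums gives
\begin{equation*}
T=\sum_{e_1,e_2,e_3\ge 1}\mu(e_1)\mu(e_2)\mu(e_3)\,R\bigl(\lcm(e_2,e_3),\lcm(e_1,e_3),\lcm(e_1,e_2);n\bigr),
\end{equation*}
where $R(A,B,C;n):=\#\{(\alpha,\beta,\gamma)\in\Z_{\ge1}^{3}:A\alpha+B\beta+C\gamma=n\}$; the weights $\lcm(e_2,e_3),\lcm(e_1,e_3),\lcm(e_1,e_2)$ appear because $t_1$ must be divisible by both $e_2$ and $e_3$, and so on. Only squarefree $e_i$ contribute. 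This is the promised expression of $T$ in terms of linear Diophantine equations.

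The technical core is a uniform estimate for $R$. Put $\ell:=\lcm(e_1,e_2,e_3)$ and $g:=\gcd\bigl(\lcm(e_2,e_3),\lcm(e_1,e_3),\lcm(e_1,e_2)\bigr)$, which (for squarefree $e_i$) is the product of the primes dividing at least two of the $e_i$. I will prove that for all squarefree $e_1,e_2,e_3$,
\begin{equation*}
R\bigl(\lcm(e_2,e_3),\lcm(e_1,e_3),\lcm(e_1,e_2);n\bigr)=\frac{n^{2}}{2\ell^{2}}\,\mathbbm{1}[g\mid n]+O\!\left(\frac{n}{\ell}+1\right).
\end{equation*}
After shifting each variable by $1$ this becomes a count of non-negative solutions of $A\alpha+B\beta+C\gamma=n-A-B-C$. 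If $g\nmid n$ there are none and $\mathbbm{1}[g\mid n]=0$; if $g\mid n$ one divides $A,B,C,n$ by $g$ to reduce to $\gcd(A,B,C)=1$, fixes the variable with the largest coefficient, writes the inner two-variable count as $\#\{(\alpha,\beta):A\alpha+B\beta=M\}=\gcd(A,B)\,M/(AB)+O(1)$, and sums this over the arithmetic progression of admissible values of the outer variable. The bookkeeping collapses because all three two-variable lcm's equal $\ell$, because $ABC=g\ell^{2}$, and because the common difference of that progression is $\gcd(A,B)$ with $\gcd(\gcd(A,B),C)=1$; the leading term works out to $(n-A-B-C)^2/(2ABC)=n^2/(2\ell^2)+O(n/\ell)$ and every accumulated error is $O(n/\ell+1)$.

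Substituting back and splitting off the leading term, the main contribution is $\tfrac{n^{2}}{2}\sum_{e_1,e_2,e_3}\mu(e_1)\mu(e_2)\mu(e_3)\,\ell^{-2}\,\mathbbm{1}[g\mid n]$, and this sum is multiplicative. At a prime $p\nmid n$ the only admissible prime-indicator patterns are the one where $p$ divides none of the $e_i$ and the three where it divides exactly one, giving a local factor $1-3/p^{2}$; at a prime $p\mid n$ all eight patterns are admissible and the extra weight-two and weight-three patterns contribute $+3/p^{2}$ and $-1/p^{2}$ respectively, giving $1-3/p^{2}+3/p^{2}-1/p^{2}=1-1/p^{2}$. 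Hence the main term equals $\prod_{p\mid n}(1-1/p^{2})\prod_{q\nmid n}(1-3/q^{2})\,\tfrac{n^2}{2}=f(n)\tfrac{n^2}{2}$.

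It remains to bound the errors. A term is nonzero only if $A+B+C\le n$, hence $g\ell^{2}=ABC\le(n/3)^{3}$ by AM--GM, so $\ell\le n^{3/2}$; since the number of squarefree triples with a prescribed lcm $m$ is $m^{o(1)}$, there are only $n^{3/2+o(1)}$ contributing triples, and then $\sum_{m\le n^{3/2}}m^{-1+o(1)}$ shows the $n/\ell$ part of the error is only $O(n^{1+o(1)})$, while the $O(1)$ part is $O(n^{3/2+o(1)})$. Finally, extending the main-term sum from the contributing triples back to all triples changes it by at most $n^{2}\sum_{m>n/3}m^{-2+o(1)}=O(n^{1+o(1)})$. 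Adding up, $T=f(n)\tfrac{n^2}{2}+O(n^{3/2+o(1)})$. The one real obstacle is the uniform estimate for $R$: the textbook bound $R=n^2/(2ABC)+O(n)$ is far too weak after summing over $\sim n^{3/2}$ triples, so one must exploit the lcm/gcd structure of the weights to obtain the $O(n/\ell+1)$ error, which is then exactly sharp enough.
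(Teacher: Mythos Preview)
Your argument is correct and follows the same overall plan as the paper---sieve out the three coprimality conditions, reduce to counting lattice points in a triangle, identify the multiplicative main term, and bound the error by controlling how many sieve terms actually contribute---but the execution differs in two places worth noting. First, the paper packages the sieve as M\"obius inversion on a poset $\textbf{A}$ of triples (five local states per prime, with M\"obius weight $2$ at the top element $(p,p,p)$), whereas you apply the arithmetic identity $[\gcd=1]=\sum_e\mu(e)$ three times and sum over triples $(e_1,e_2,e_3)$ (eight local states per prime); your three ``extra'' states at each prime---those with $p$ dividing at least two of the $e_i$---all map to the paper's $(p,p,p)$ and their signed weights $+1,+1,+1,-1$ sum to the paper's $2$, so the two expansions agree after regrouping. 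Second, the paper estimates $S(a)$ by projecting the solution lattice to $\Z^2$ and recognising the count as a binomial coefficient $\binom{t}{2}$ with $|t-n/\ell|\le 2$, while you freeze one variable and sum the standard two-variable count $\gcd(A,B)M/(AB)+O(1)$ over the admissible arithmetic progression; both routes give the same $n^2/(2\ell^2)\,\mathbbm{1}[g\mid n]+O(n/\ell+1)$. From there the two proofs coincide: the identity $ABC=g\ell^2$ together with AM--GM forces $\ell\le (n/3)^{3/2}$ on every nonzero term, and divisor-type bounds of the shape $c^{\omega(m)}=m^{o(1)}$ handle the number of triples with a given $\ell$. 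Your version is somewhat more economical, avoiding the poset formalism and the four-way split $M,E_1,F_0,F_1$ of the error; the paper's version makes the structure of the sieve weights more transparent via the isomorphism $\textbf{A}\cong\prod_p\textbf{I}_p$.
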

	We'll give a brief overview of our proof. Let $\textbf{S}$ be the set of compositions $s=(s_1,s_2,s_3)$ of $n$ with $3$ parts. For each prime $p$ and pair of indices $i,j$ we can ask whether $p$ divides $\gcd(s_i,s_j)$. If the answer is ever yes, then $s$ is not in $\textbf{T}$. Conversely if the answer is always no, then $s\in \textbf{T}$. 

	For  a triple of positive integers $a = (a_1,a_2,a_3)$ let $\textbf{S}(a) \subseteq \textbf{S}$ be the set of compositions $s$ such that $a_i \mid s_i$ for all $i$. In particular, we have $\textbf{S} = \textbf{S}(1,1,1)$. Then
		\begin{equation}\label{pie}
		\textbf{S} \setminus \textbf{T}= \bigcup_{p\leq n}\left(\textbf{S}(p,p,1)\cup \textbf{S}(p,1,p) \cup \textbf{S}(1,p,p)\right)
		\end{equation}
	Where the union is over all primes $p\leq n$. This suggests that we use Inclusion-Exclusion to find $T$. We will pursue a related strategy which makes this computation cleaner. The key observation is that sets of the form  $\textbf{S}(a_1,a_2,a_3)$ are closed under intersection:
		\begin{equation}\label{lcm}
		\textbf{S}(a_1,a_2,a_3)\cap \textbf{S}(b_1,b_2,b_3)
			= \textbf{S}(\lcm(a_1,b_1),\lcm(a_2,b_2),\lcm(a_3,b_3))
		\end{equation}
	Furthermore, whenever we have two sets of triples $(a_1,a_2,a_3)$ and $(b_1,b_2,b_3)$ with $a_i \mid b_i$ for all $i$ we have
		$$
		\textbf{S}(b_1,b_2,b_3) \subseteq \textbf{S}(a_1,a_2,a_3)
		$$
	This implies that the triples that can be obtained by intersecting sets in \eqref{pie} form a partially ordered set under componentwise division. In the next section we will reformulate the problem in terms of this poset, then use the M\"obius Inversion Formula to get an exact formula for $T$ in terms of the $S(a_1,a_2,a_3)$. Next, we use a geometric argument to estimate the $S(a_1,a_2,a_3)$, which is the number of solutions to the Diophantine equation
		$$
		a_1x_1 + a_2x_2 + a_3x_3 = n, \quad \forall i \ x_i > 0
		$$
	Finally we combine our results to obtain \eqref{maintheorem}. 

\section{Poset Formulation}
	In this section we will define a poset which lets us express \eqref{pie} in a more convenient form. Our main tool is the M\"obius Inversion Theorem for posets. The reader may wish to refer to chapter $3$ of \cite{Stanley} for a full treatment of posets.

	Define $\textbf{A}\subset \N^3$ to be the set obtained by requiring
		\begin{itemize}
			\item
				$(1,1,1)\in \textbf{A}$
			\item
				$(p,p,1), (p,1,p), (1,p,p)\in \textbf{A}$ for all primes $p\leq n$
			\item
				$\textbf{A}$ is closed under componentwise $\lcm$. That is, for all $a,b \in \textbf{A}$ we require $\lcm(a,b) \in \textbf{A}$, where
					\begin{align*}
					\lcm(a,b) &:= (\lcm(a_1,b_1), \lcm(a_2,b_2), \lcm(a_3,b_3))
					\end{align*}
		\end{itemize}
	Define a partial order on $\textbf{A}$ by letting $a\preceq b$ if 
	and only if $a_i \mid b_i$ for all $i$. We note that $\textbf{A}$ is finite and has least element $\hat{0}:=(1,1,1)$ and greatest element $\hat{1}:= (n\#,n\#,n\#)$ where the primorial $n\#:= \prod_{p\leq n}p$

	We will relate this poset to our original problem. Let $\textbf{S}(a)$ be defined as above for every $a\in \textbf{A}$. Then for all $a,b\in \textbf{A}$ we have:
		\begin{lemma}\label{lcmlemma}
		$\textbf{S}(a)\cap \textbf{S}(b) = \textbf{S}(\lcm(a,b)) $
		\end{lemma}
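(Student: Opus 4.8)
The set $\textbf{S}(c)$ for a triple $c = (c_1, c_2, c_3)$ consists of all compositions $s = (s_1, s_2, s_3)$ of $n$ with $c_i \mid s_i$ for each $i$. So a composition $s$ lies in $\textbf{S}(a) \cap \textbf{S}(b)$ precisely when $a_i \mid s_i$ and $b_i \mid s_i$ for every $i$. The classical fact that $a_i \mid s_i$ and $b_i \mid s_i$ together hold if and only if $\lcm(a_i, b_i) \mid s_i$ then finishes the argument componentwise.

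**The steps, in order, are as follows.** First I would fix an arbitrary $s \in \textbf{S}(a) \cap \textbf{S}(b)$ and observe that $a_i \mid s_i$ and $b_i \mid s_i$ for each $i \in \{1,2,3\}$; by the defining property of the least common multiple, this gives $\lcm(a_i, b_i) \mid s_i$ for each $i$, hence $s \in \textbf{S}(\lcm(a,b))$. Conversely, given $s \in \textbf{S}(\lcm(a,b))$, I would use $a_i \mid \lcm(a_i,b_i) \mid s_i$ and $b_i \mid \lcm(a_i,b_i) \mid s_i$ to conclude $s \in \textbf{S}(a)$ and $s \in \textbf{S}(b)$, so $s \in \textbf{S}(a) \cap \textbf{S}(b)$. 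Since each $\textbf{S}(\cdot)$ is a subset of the fixed finite set $\textbf{S}$ of $3$-compositions of $n$, there are no issues with the ambient set, and the two containments together give the claimed equality. This is essentially the statement \eqref{lcm} already recorded in the introduction, now phrased for elements $a, b \in \textbf{A}$; one should note that $\lcm(a,b) \in \textbf{A}$ by the closure property in the definition of $\textbf{A}$, so the right-hand side is indeed of the form $\textbf{S}(c)$ with $c \in \textbf{A}$ and the lemma is well-posed.

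**There is no real obstacle here** — the lemma is a bookkeeping fact about divisibility, and the only thing to be careful about is making the componentwise reduction explicit rather than waving at it, since this identity is the structural backbone for the Möbius inversion to follow. If one wanted to be maximally terse, the whole proof is the chain of equivalences $\bigl(a_i \mid s_i \text{ and } b_i \mid s_i \text{ for all } i\bigr) \iff \bigl(\lcm(a_i,b_i) \mid s_i \text{ for all } i\bigr)$, valid for any $s \in \textbf{S}$.
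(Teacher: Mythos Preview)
Your proposal is correct and follows essentially the same approach as the paper: both reduce the set equality to the componentwise equivalence $(a_i \mid s_i \text{ and } b_i \mid s_i) \iff \lcm(a_i,b_i) \mid s_i$. The paper states this as a single chain of ``iff''s rather than two separate containments, but the content is identical.
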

			\begin{proof}
			We have $s\in \textbf{S}(a)\cap \textbf{S}(b)$ iff for all $i$ both $a_i,b_i$ divide $s_i$. This occurs iff $\lcm(a_i,b_i) \mid s_i$ which is equivalent to $s\in \textbf{S}(\lcm(a,b))$.
			\end{proof}
		\begin{lemma}\label{divlemma}
		$a \preceq b \implies \textbf{S}(b) \subseteq \textbf{S}(a)$
		\end{lemma}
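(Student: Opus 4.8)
The plan is to observe that this is immediate from the transitivity of divisibility, and to record the short deduction; I will also note the even shorter route through Lemma~\ref{lcmlemma}, which fits naturally with the poset language being developed.

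First I would unwind the definitions. By the definition of the partial order, $a \preceq b$ means $a_i \mid b_i$ for each $i \in \{1,2,3\}$. By the definition of $\textbf{S}(\cdot)$, membership $s \in \textbf{S}(b)$ means $b_i \mid s_i$ for each $i$. Chaining the two divisibilities, $a_i \mid b_i$ and $b_i \mid s_i$ force $a_i \mid s_i$ for each $i$, which is exactly the condition $s \in \textbf{S}(a)$. Since $s \in \textbf{S}(b)$ was arbitrary, this gives $\textbf{S}(b) \subseteq \textbf{S}(a)$.

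Alternatively, and perhaps more in the spirit of what we have already set up: the condition $a \preceq b$ is equivalent to $\lcm(a_i,b_i) = b_i$ for all $i$, i.e.\ $\lcm(a,b) = b$. Applying Lemma~\ref{lcmlemma} then yields $\textbf{S}(a) \cap \textbf{S}(b) = \textbf{S}(\lcm(a,b)) = \textbf{S}(b)$, and since any intersection is contained in each of its terms we conclude $\textbf{S}(b) \subseteq \textbf{S}(a)$.

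There is no genuine obstacle here. The only point that warrants a moment's care is that $\textbf{S}(b)$ be defined at all, which requires $b \in \textbf{A}$; but this is part of the hypothesis, since the statement quantifies over $a,b \in \textbf{A}$, so nothing further is needed.
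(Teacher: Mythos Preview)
Your first argument is exactly the paper's proof: assume $a\preceq b$, take $s\in\textbf{S}(b)$, and use transitivity of divisibility componentwise to conclude $s\in\textbf{S}(a)$. The alternative route via Lemma~\ref{lcmlemma} is also correct but is not what the paper does; nothing more is needed here.
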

			\begin{proof}
			Let $a\preceq b$ and let $s\in \textbf{S}(b)$. Then for all $i$ we have $a_i$ divides $b_i$ which divides $s_i$. Hence $s \in \textbf{S}(a)$.
			\end{proof}

	Each element $s\in \textbf{S}$ belongs to some family of sets of the form $\textbf{S}(a)$. Let $\hat{1}(s)$ be the maximal such $a$ , defined by:
		$$
		\hat{1}(s) := \lcm\left( \{ a \mid \ s\in \textbf{S}(a) \} \right)
		$$
	For each $a\in \textbf{A}$ let $\textbf{T}(a)$ be the set of all $s\in \textbf{S}$ with $\hat{1}(s) = a$:
		$$
		\textbf{T}(a) :=\{s \mid \hat{1}(s) = a \}
		$$
	Then 
		\begin{lemma}
			$\textbf{T}(\hat{0}) = \textbf{T}$
		\end{lemma}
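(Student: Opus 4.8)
The plan is to establish the two inclusions $\textbf{T}\subseteq\textbf{T}(\hat 0)$ and $\textbf{T}(\hat 0)\subseteq\textbf{T}$ by unwinding the definition of $\hat 1(s)$. The first thing I would record is a structural fact about this operator. Fix $s\in\textbf{S}$ and write $I(s):=\{a\in\textbf{A}\mid s\in\textbf{S}(a)\}$. This set is nonempty, since $\hat 0=(1,1,1)\in I(s)$, and finite, since $\textbf{A}$ is finite. By Lemma \ref{lcmlemma} it is closed under componentwise $\lcm$: if $a,b\in I(s)$ then $s\in\textbf{S}(a)\cap\textbf{S}(b)=\textbf{S}(\lcm(a,b))$. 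Hence $\hat 1(s)=\lcm(I(s))$ is itself a member of $I(s)$, i.e.\ $s\in\textbf{S}(\hat 1(s))$, and it is the $\preceq$-largest element of $I(s)$. Combined with Lemma \ref{divlemma}, this gives the compact description
\[
s\in\textbf{S}(a)\iff a\preceq\hat 1(s),
\]
valid for every $a\in\textbf{A}$: ``$\Rightarrow$'' is exactly the statement that $\hat 1(s)$ is an upper bound for $I(s)$, and ``$\Leftarrow$'' follows from $s\in\textbf{S}(\hat 1(s))$ via Lemma \ref{divlemma}.

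Next I would translate $\hat 1(s)=\hat 0$ into a coprimality statement. Because $\hat 0$ is the least element of $\textbf{A}$, we have $\hat 1(s)=\hat 0$ precisely when $I(s)$ contains no $a\succ\hat 0$. By construction every element of $\textbf{A}$ other than $\hat 0$ is a componentwise $\lcm$ of a nonempty finite set of generators of the form $(p,p,1),(p,1,p),(1,p,p)$ with $p\le n$ prime, each of which is itself $\succ\hat 0$; picking one such generator $g$, we get $g\preceq a$, so $s\in\textbf{S}(a)$ forces $s\in\textbf{S}(g)$ by Lemma \ref{divlemma}. Consequently $\hat 1(s)\ne\hat 0$ if and only if $s\in\textbf{S}(g)$ for at least one generator $g\in\{(p,p,1),(p,1,p),(1,p,p)\mid p\le n\text{ prime}\}$, and by the definition of $\textbf{S}(\cdot)$ this is equivalent to: some prime $p\le n$ divides $\gcd(s_i,s_j)$ for some pair $i\ne j$. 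Since any prime factor of a part $s_i$ is automatically at most $s_i<n$, the bound $p\le n$ imposes nothing, so this condition says exactly $s\notin\textbf{T}$. Reading the equivalences in the other direction, $s\in\textbf{T}(\hat 0)\iff\hat 1(s)=\hat 0\iff\gcd(s_i,s_j)=1$ for all $i\ne j\iff s\in\textbf{T}$, which is the claim.

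The only step that needs a little care is the first one: one must confirm that $\hat 1(s)$ really lies in $I(s)$, so that the equivalence $s\in\textbf{S}(a)\iff a\preceq\hat 1(s)$ is legitimate, and this is precisely where Lemma \ref{lcmlemma} and the finiteness of $\textbf{A}$ enter. Everything after that is routine bookkeeping with the generators of $\textbf{A}$, so I do not expect any genuine obstacle.
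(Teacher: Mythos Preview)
Your proof is correct and follows essentially the same route as the paper's: both reduce $\hat 1(s)=\hat 0$ to the statement that $s$ lies in none of the generator sets $\textbf{S}(p,p,1),\textbf{S}(p,1,p),\textbf{S}(1,p,p)$, and then invoke \eqref{pie}. Your version is more careful in places the paper glosses over---you explicitly verify $\hat 1(s)\in I(s)$ via Lemma~\ref{lcmlemma}, spell out why every $a\succ\hat 0$ in $\textbf{A}$ dominates a generator, and note that the restriction $p\le n$ is vacuous---so if anything it is a cleaner write-up of the same argument.
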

			\begin{proof}
			We have $s\in \textbf{T}(\hat{0})$ if and only if $\hat{1}(s) = \hat{0}$. This occurs iff
				$$
				s \not \in \bigcup_{p}\left(\textbf{S}(p,p,1)\cup \textbf{S}(p,1,p) \cup \textbf{S}(1,p,p)\right)
				$$
			since otherwise we would have some $a\in \{(p,p,1),(p,1,p) ,(1,p,p) \}$ such that
				$$
				\hat{0} \prec a \preceq \hat{1}(s)
				$$
			By \eqref{pie}, see that this is equivalent to $s\in \textbf{T}$.
			\end{proof}

	With these definitions we can give an exact formula for $T$.
		\begin{theorem}
		For all $a\in \textbf{A}$, 
			$$
			T(a) = \sum_{a\preceq b} \mu_{\textbf{A}}(a,b) S(b)
			$$
		where $\mu_{\textbf{A}}$ is the M\"obius function on $\textbf{A}$
		\end{theorem}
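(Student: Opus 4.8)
The plan is to recognize this as a direct application of Möbius inversion on the finite poset $\textbf{A}$, once we identify the right pair of functions related by summation. The natural candidates are $f(b) := T(b)$ and $g(a) := S(a)$. So the first step is to establish the summation identity
$$
S(a) = \sum_{a \preceq b} T(b),
$$
i.e. that $\textbf{S}(a)$ is partitioned by the sets $\textbf{T}(b)$ over those $b \succeq a$. The containment $\textbf{T}(b) \subseteq \textbf{S}(a)$ whenever $a \preceq b$ follows because $s \in \textbf{T}(b)$ means $\hat{1}(s) = b$, so in particular $s \in \textbf{S}(b) \subseteq \textbf{S}(a)$ by Lemma~\ref{divlemma}. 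Conversely, every $s \in \textbf{S}(a)$ lies in exactly one $\textbf{T}(b)$, namely $b = \hat{1}(s)$, and we need $a \preceq \hat{1}(s)$; but $s \in \textbf{S}(a)$ forces $a$ into the set $\{a' \mid s \in \textbf{S}(a')\}$ whose componentwise $\lcm$ is $\hat{1}(s)$, so $a \preceq \hat{1}(s)$. Hence the $\textbf{T}(b)$ with $b \succeq a$ are pairwise disjoint (distinct values of $\hat{1}(s)$) and cover $\textbf{S}(a)$, giving the displayed identity.

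Once that identity is in hand, the conclusion is immediate from the Möbius Inversion Theorem for posets (e.g. Chapter~3 of \cite{Stanley}): since $\textbf{A}$ is finite, the relation $g(a) = \sum_{a \preceq b} f(b)$ for all $a \in \textbf{A}$ is equivalent to $f(a) = \sum_{a \preceq b} \mu_{\textbf{A}}(a,b)\, g(b)$ for all $a \in \textbf{A}$. Applying this with $g = S$ and $f = T$ yields exactly
$$
T(a) = \sum_{a \preceq b} \mu_{\textbf{A}}(a,b)\, S(b).
$$
One small point worth checking is that we should use the "dual" form of Möbius inversion — summing up the poset rather than down — but since $\textbf{A}$ has a top element $\hat{1}$ and every interval $[a, \hat{1}]$ is finite, this causes no difficulty; alternatively one can simply invoke the statement for the order-dual poset $\textbf{A}^*$.

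The only genuine content, and the step I expect to require the most care, is verifying that $s \in \textbf{S}(a)$ implies $a \preceq \hat{1}(s)$ — that is, that $\hat{1}(s)$, defined as the componentwise $\lcm$ of all $a'$ with $s \in \textbf{S}(a')$, is itself an element of $\textbf{A}$ and indeed satisfies $s \in \textbf{S}(\hat{1}(s))$. The membership in $\textbf{A}$ is guaranteed by the closure of $\textbf{A}$ under componentwise $\lcm$ (here one should note the index set $\{a' \in \textbf{A} \mid s \in \textbf{S}(a')\}$ is nonempty since it contains $\hat{0}$, and finite since $\textbf{A}$ is), and $s \in \textbf{S}(\hat{1}(s))$ follows from Lemma~\ref{lcmlemma} applied repeatedly. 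Everything else is a formal consequence of the structure already set up in this section.
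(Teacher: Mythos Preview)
Your proposal is correct and follows essentially the same approach as the paper: establish the partition identity $S(a) = \sum_{a\preceq b} T(b)$ via the map $s \mapsto \hat{1}(s)$, then invoke M\"obius inversion on $\textbf{A}$. The only cosmetic difference is that the paper writes out the inversion step explicitly rather than citing it, and you are slightly more careful in justifying that $\hat{1}(s)\in\textbf{A}$ via closure under componentwise $\lcm$.
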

			\begin{proof}
			Let $s\in \textbf{S}(a)$. Then $a\preceq \hat{1}(s)$. Therefore
				$$
				\textbf{S}(a) = \bigcup_{a\preceq b} \textbf{T}(b)
				$$
			Since the $\textbf{T}(b)$ are disjoint, we see that
				\begin{equation}\label{sint}
				S(a) = \sum_{a\preceq b} T(b)
				\end{equation}
			Sums of this form may be inverted by the M\"obius Inversion Formula for posets. Recall that the M\"obius function $\mu_{\textbf{A}}$ on $\textbf{A}$ is defined for pairs $a\preceq c$ in a recursive fashion:
				\begin{align*}
				\mu_{\textbf{A}}(a,a) 
					&:= 1
					\\
				\mu_{\textbf{A}}(a,c)
					&:= - \sum_{a\preceq b\prec c} \mu_{\textbf{A}}(a,b)
				\end{align*}
			This guarantees that
				$$
				\sum_{a\preceq b\preceq c} \mu_{\textbf{A}}(a,c)
					= \delta_{ac}
				$$
			Hence
				\begin{align*}
				T(a)
					&= \sum_{a\leq c} T(c) \delta_{ac}
					\\
					&= \sum_{a\leq c} T(c) \sum_{a\preceq b\preceq c} \mu_{\textbf{A}}(a,b)
				\end{align*}
			Interchanging the order of summation and using \eqref{sint} yields the desired result:
				\begin{align*}
				T(a)
					&= \sum_{a\leq b} \mu_{\textbf{A}}(a,b) \sum_{b\preceq c} T(c)
					\\
					&=  \sum_{a\leq b} \mu_{\textbf{A}}(a,b) S(b)
				\end{align*}
			\end{proof}
	As a consequence, we see that
		\begin{corollary}\label{explicitmobius}
		\begin{equation*}
		T = T(\hat{0}) = \sum_{a\in \textbf{A}} \mu_{\textbf{A}}(\hat{0},a) S(a)
		\end{equation*}
		\end{corollary}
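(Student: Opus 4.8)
The plan is to obtain the corollary as the special case $a = \hat{0}$ of the theorem just proved, combined with the already-established identification of $\textbf{T}(\hat{0})$ with $\textbf{T}$. First I would instantiate the formula $T(a) = \sum_{a \preceq b} \mu_{\textbf{A}}(a,b)\, S(b)$ at $a = \hat{0} = (1,1,1)$. Next I would observe that $\hat{0}$ is the least element of the poset $\textbf{A}$: since $1 \mid c$ for every positive integer $c$, we have $\hat{0} \preceq a$ for every $a \in \textbf{A}$, so the summation constraint $\hat{0} \preceq b$ is satisfied by all elements of $\textbf{A}$, and the sum may be rewritten as $\sum_{a \in \textbf{A}} \mu_{\textbf{A}}(\hat{0},a)\, S(a)$. (This also guarantees that each M\"obius value $\mu_{\textbf{A}}(\hat{0},a)$ is well defined.) Finally I would invoke the lemma $\textbf{T}(\hat{0}) = \textbf{T}$, which gives $T = T(\hat{0})$, and chain the two equalities to reach $T = \sum_{a \in \textbf{A}} \mu_{\textbf{A}}(\hat{0},a)\, S(a)$.

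I do not anticipate any genuine obstacle here: the corollary is essentially a reading-off of the theorem at the bottom of the poset. The only points requiring care are bookkeeping ones, namely confirming that $\textbf{A}$ really does have $(1,1,1)$ as its minimum -- which is built into the construction, since each of the listed generators dominates $(1,1,1)$ componentwise and componentwise $\lcm$ preserves this domination -- and noting that the $a = \hat{0}$ term is just $\mu_{\textbf{A}}(\hat 0,\hat 0)\, S(\hat 0) = S = |\textbf{S}|$, which keeps the formula consistent with the trivial case. With Corollary \ref{explicitmobius} established, the substantive remaining work of the paper is quantitative: estimating the Diophantine counts $S(a)$ and controlling the signed sum of M\"obius values over $\textbf{A}$, which is where the main term $f(n)n^2/2$ and the error $O(n^{3/2+o(1)})$ will emerge.
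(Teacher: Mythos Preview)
Your proposal is correct and matches the paper's own reasoning exactly: the corollary is simply the theorem $T(a) = \sum_{a\preceq b} \mu_{\textbf{A}}(a,b)\,S(b)$ specialized at $a=\hat{0}$, together with the lemma $\textbf{T}(\hat{0})=\textbf{T}$ and the observation that $\hat{0}\preceq b$ for all $b\in\textbf{A}$. The paper presents it without a written proof, as an immediate consequence, and your bookkeeping remarks about $\hat{0}$ being the minimum and $\mu_{\textbf{A}}(\hat{0},\hat{0})S(\hat{0})=S$ are sound but not strictly needed.
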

	In the next section we use the properties of $\textbf{A}$ to derive this M\"obius function explicitly.

\section{Computing the M\"obius Function}
	To compute the M\"obius Function we investigate the structure of $\textbf{A}$. This can be characterized as follows. For each prime $p\leq n$ let $\textbf{I}_p\subseteq \textbf{A}$ be the subposet with elements (see Figure \ref{Ip})
		$$
		\textbf{I}_p = \{\hat{0}, (p,p,1), (p,1,p), (1,p,p), (p,p,p) \}
		$$

	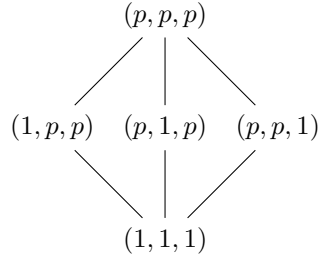
\begin{figure}
		\centering
		\begin{tikzpicture}[scale=1.5]
		  \node (one) at (0,2) {$(p,p,p)$};
		  \node (a) at (-1,1) {$(1,p,p)$};
		  \node (b) at (0,1) {$(p,1,p)$};
		  \node (c) at (1,1) {$(p,p,1)$};
		  \node (zero) at (0,0) {$(1,1,1)$};
		  \draw (zero) -- (a) -- (one) -- (b) -- (zero) -- (c) -- (one);
		\end{tikzpicture}
		\caption{The Hasse diagram for $I_p$}
		\label{Ip}
	\end{figure}

	Let $\textbf{I}$ be the Cartesian product of the $\textbf{I}_p$, which we will denote by
		$$
		\textbf{I}:= \otimes_{p\leq n} \textbf{I}_p
		$$
	Then
		\begin{lemma}\label{posetiso}
		$\textbf{A}\cong \textbf{I}$
		\end{lemma}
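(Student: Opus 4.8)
The plan is to construct an explicit order isomorphism $\Phi\colon \textbf{I} \to \textbf{A}$ and check it is a well-defined order-preserving bijection with order-preserving inverse. An element of $\textbf{I}$ is a tuple $(c_p)_{p\leq n}$ with each $c_p \in \textbf{I}_p$; I would send it to the componentwise $\lcm$ over all $p$, namely $\Phi((c_p)_p) := \lcm_{p\leq n} c_p$. Since each $c_p$ has coordinates that are $1$ or $p$, and distinct primes contribute to distinct prime factors, this $\lcm$ is just the ``coordinatewise product'' of the $c_p$, so no information is lost: the $p$-part of the $i$th coordinate of $\Phi((c_p)_p)$ is exactly $p$ if the $i$th coordinate of $c_p$ is $p$, and $1$ otherwise. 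This already makes injectivity transparent.

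First I would verify that $\Phi$ lands in $\textbf{A}$: each generator of $\textbf{A}$ lies in the image (take $c_p = \hat 0$ for all but one prime), and $\textbf{A}$ is by definition closed under $\lcm$, so every value of $\Phi$ — being an $\lcm$ of generators — is in $\textbf{A}$. Conversely, for surjectivity I would argue that every $a \in \textbf{A}$ is reachable: $\textbf{A}$ is generated from the listed generators by repeated $\lcm$, and I can show by induction on the construction that any such $a$ equals $\Phi$ of the tuple whose $p$th entry reads off the $p$-parts of $a$'s coordinates — one checks this $p$th entry actually lies in $\textbf{I}_p$, i.e. that the pattern of which coordinates of $a$ are divisible by $p$ is always one of $(0,0,0)$, $(1,1,0)$, $(1,0,1)$, $(0,1,1)$, $(1,1,1)$, never a pattern like $(1,0,0)$. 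This is the crux: it holds for the generators and is preserved under $\lcm$ because $\textbf{I}_p$ is closed under join (the join of $(p,p,1)$ and $(p,1,p)$ is $(p,p,p)$, etc., as Figure~\ref{Ip} shows), so inducting on the generation process of $\textbf{A}$ gives that every element's $p$-pattern stays inside $\textbf{I}_p$.

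Finally I would check the order structure. In the product poset $\textbf{I}$, $(c_p)_p \preceq (d_p)_p$ iff $c_p \preceq d_p$ in $\textbf{I}_p$ for every $p$, which (unwinding the definition of $\preceq$ on $\textbf{A}$ restricted to these five elements) says: for each $p$ and each coordinate $i$, if the $i$th entry of $c_p$ is $p$ then so is that of $d_p$. On the $\textbf{A}$ side, $\Phi((c_p)_p) \preceq \Phi((d_p)_p)$ means each coordinate of the first divides the corresponding coordinate of the second, i.e. for each $i$ and each prime $p$, the $p$-part condition above. These are literally the same condition once we note that divisibility of coordinates decomposes prime-by-prime and each prime appears to at most the first power. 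Hence $\Phi$ is an order isomorphism, proving $\textbf{A} \cong \textbf{I}$.

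The main obstacle is the surjectivity/structural step: one must rule out that $\lcm$-combinations of the generators ever produce a triple whose divisibility-by-$p$ pattern falls outside $\textbf{I}_p$ (the essential point being that the three ``two-coordinate'' patterns together with their pairwise joins close up to exactly the five-element poset $\textbf{I}_p$ and never generate a singleton pattern). I expect this to be a short induction on the number of $\lcm$ operations used to build a given element of $\textbf{A}$, using closure of $\textbf{I}_p$ under join; the bijection and order-preservation statements are then essentially bookkeeping with the Chinese-Remainder-style prime-by-prime decomposition of divisibility.
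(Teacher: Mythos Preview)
Your proposal is correct and essentially the same approach as the paper's: the paper defines the map $\phi:\textbf{A}\to\textbf{I}$ by reading off the $p$-part of each coordinate and gives as inverse exactly your coordinatewise-product map $\Phi$, then checks order-preservation via the same prime-by-prime divisibility argument. You are slightly more explicit than the paper about why $\phi_p(a)\in\textbf{I}_p$ (the ``no singleton pattern'' point), which the paper dismisses as ``easy to see from the definition of $\textbf{A}$''; otherwise the arguments coincide.
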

			\begin{proof}
			This amounts to "factoring" every $a\in \textbf{A}$ componentwise. Specifically, for any prime $p\leq n$ let $\phi_p(a)$ be the triple whose $i$th component is given by
				$$
				\phi_p(a)_i :=
				\begin{cases}
				p \quad : \quad p \mid a_i
				\\
				1 \quad : \quad p \nmid a_i
				\end{cases}
				$$
			then it's easy to see from the definition of $\textbf{A}$ that $\phi_p(a) \in \textbf{I}_p$. By collecting all the $\phi_p$ we obtain the map $\phi:\textbf{A}\to \textbf{I}$ given by
				$$
				\phi(a) := \otimes_{p\leq n}\phi_p(a)
				$$
			This map is invertible, with inverse taking $x\in \textbf{I}$ with $x = \otimes_{p\leq n} x_p$ to the triple whose $i$th component is obtained by componentwise multiplication
				$$
				\phi^{-1}(x)_i := \prod_{p\leq n}(x_p)_i
				$$
			Hence $\phi$ is a bijection. To see that $\phi$ is order preserving, note that for $a,b\in \textbf{A}$ we have $a\preceq b$ if and only if $a_i\mid b_i$ for every $i$. This occurs if and only if for each prime $p\mid a_i$ implies $p\mid b_i$, which is equivalent to $\phi_p(a)\preceq \phi_p(b)$ for all $p$, or $\phi(a)\preceq \phi(b)$.
			\end{proof}
	\begin{figure}[ht]
		\centering
		\begin{tikzpicture}[scale=1.5]
		  \node (one) at (0,2) {$2$};
		  \node (a) at (-1,1) {$-1$};
		  \node (b) at (0,1) {$-1$};
		  \node (c) at (1,1) {$-1$};
		  \node (zero) at (0,0) {$1$};
		  \draw (zero) -- (a) -- (one) -- (b) -- (zero) -- (c) -- (one);
		\end{tikzpicture}
		\caption{$\mu_{I_p}(\hat{0},x)$ for all $x\in \textbf{I}_p$}
		\label{muIp}
	\end{figure}
	We can use this to compute M\"obius function on $\textbf{A}$. Let $\phi$ be the poset isomorphism in \eqref{posetiso}. Since Isomorphic Posets have the same M\"obius function we see that
		\begin{align*}
		\mu_{\textbf{A}}(a,b) 
			&= \mu_{\textbf{I}}(\phi(a),\phi(b))
		\end{align*}
	Since the M\"obius function of a product is the product of the M\"obius functions we obtain
		\begin{align}
		\mu_{\textbf{I}}(\phi(a),\phi(b))
			&= \prod_{p\leq n} \mu_{\textbf{I}_p}(\phi_p(a),\phi_p(b))
			\label{mobiusproduct}
		\end{align}
	The $\mu_{\textbf{I}_p}$ can be determined by inspection. We are interested in evaluating the sum \eqref{explicitmobius}, so we need 
		$$
		\mu_{\textbf{I}_p}(\phi_p(\hat{0}), \phi_p(a)) = \mu_{\textbf{I}_p}(\hat{0}, \phi_p(a))
		$$
	for all $a\in \textbf{A}$. We will express this in a way which will be convenient later on. For all primes $p$ and $a\in \textbf{A}$ let $\kappa_p(a)$ be the number of components $a_i$ divisible by $p$. Then for all $x\in \textbf{I}_p$ we have (see Figure \ref{muIp})
		$$
		\mu_{\textbf{I}_p}(\hat{0}, x)
			= (-1)^{\kappa_p(x)-1}(\kappa_p(x)-1)
		$$
	Then since $\kappa_p(a) = \kappa_p(\phi_p(a))$ we can use \eqref{mobiusproduct} to obtain
		\begin{align*}
		\mu_{\textbf{A}}(\hat{0},a)
			&= \prod_{p\leq n} \mu_{\textbf{I}_p}(\hat{0},\phi_p(a))
			\\
			&= \prod_{p\leq n} (-1)^{\kappa_p(\phi_p(a))-1}(\kappa_p(\phi_p(a))-1)
			\\
			&= \prod_{p\leq n} (-1)^{\kappa_p(a)-1}(\kappa_p(a)-1)
		\end{align*}
	Plugging this into \eqref{explicitmobius} we obtain the following form for $\textbf{T}$:
		\begin{theorem}\label{explicit}
		\begin{equation*}
		T = \sum_{a\in \textbf{A}} \prod_{p\leq n} (-1)^{\kappa_p(a)-1}(\kappa_p(a)-1) S(a)
		\end{equation*}
		\end{theorem}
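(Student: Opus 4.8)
Since Corollary~\ref{explicitmobius} already expresses $T$ as $\sum_{a\in\textbf{A}}\mu_{\textbf{A}}(\hat{0},a)\,S(a)$, the whole content of the theorem is the evaluation $\mu_{\textbf{A}}(\hat{0},a)=\prod_{p\le n}(-1)^{\kappa_p(a)-1}(\kappa_p(a)-1)$. The plan is therefore just to compute this M\"obius function, using the poset isomorphism $\phi:\textbf{A}\to\textbf{I}=\otimes_{p\le n}\textbf{I}_p$ of Lemma~\ref{posetiso} to reduce everything to the five-element posets $\textbf{I}_p$.

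First I would use that isomorphic posets have the same M\"obius function to write $\mu_{\textbf{A}}(\hat{0},a)=\mu_{\textbf{I}}(\hat{0},\phi(a))$ (noting $\phi(\hat{0})=\hat{0}$, since $\phi_p(1,1,1)=(1,1,1)$ for every $p$). Then, since $\textbf{I}$ is a finite product of the posets $\textbf{I}_p$, each of which has a least element, the standard multiplicativity of the M\"obius function over products (Chapter~3 of \cite{Stanley}) gives $\mu_{\textbf{I}}(\hat{0},\phi(a))=\prod_{p\le n}\mu_{\textbf{I}_p}(\hat{0},\phi_p(a))$; here every prime not dividing $a_1a_2a_3$ contributes a factor $\mu_{\textbf{I}_p}(\hat{0},\hat{0})=1$, so the product is genuinely finite.

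Next I would compute $\mu_{\textbf{I}_p}(\hat{0},x)$ for the five elements $x\in\textbf{I}_p$ directly from the defining recursion of the M\"obius function, reading off the covers from the Hasse diagram in Figure~\ref{Ip}: $\mu_{\textbf{I}_p}(\hat{0},\hat{0})=1$; $\mu_{\textbf{I}_p}(\hat{0},x)=-\mu_{\textbf{I}_p}(\hat{0},\hat{0})=-1$ for each of the three middle elements; and $\mu_{\textbf{I}_p}(\hat{0},(p,p,p))=-(1+3(-1))=2$. Writing $\kappa_p(x)$ for the number of coordinates of $x$ divisible by $p$, these values correspond to $\kappa_p(x)=0,2,3$ respectively, and in each case equal $(-1)^{\kappa_p(x)-1}(\kappa_p(x)-1)$ --- the missing case $\kappa_p(x)=1$ never arises in $\textbf{I}_p$, so we need not worry about it. Since $\kappa_p(\phi_p(a))=\kappa_p(a)$ by the definition of $\phi_p$, substituting into the product above gives $\mu_{\textbf{A}}(\hat{0},a)=\prod_{p\le n}(-1)^{\kappa_p(a)-1}(\kappa_p(a)-1)$, and feeding this into Corollary~\ref{explicitmobius} yields the stated formula.

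There is really no hard step here; the argument is bookkeeping on top of Lemma~\ref{posetiso}. If anything, the point to be careful with is matching the three nonzero M\"obius values of $\textbf{I}_p$ against the uniform closed form $(-1)^{\kappa_p-1}(\kappa_p-1)$, and observing that only finitely many primes contribute a nontrivial factor so that the product formula for M\"obius functions applies without any convergence concern.
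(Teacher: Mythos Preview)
Your proposal is correct and follows essentially the same route as the paper: reduce via Corollary~\ref{explicitmobius} to computing $\mu_{\textbf{A}}(\hat{0},a)$, pass through the isomorphism of Lemma~\ref{posetiso} and the multiplicativity of the M\"obius function over products, and read off $\mu_{\textbf{I}_p}(\hat{0},x)$ by inspection to obtain the closed form $(-1)^{\kappa_p-1}(\kappa_p-1)$. Your added remarks that $\kappa_p=1$ never occurs in $\textbf{I}_p$ and that all but finitely many factors are $1$ are helpful clarifications but do not change the argument.
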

	Although there are $|\textbf{A}| = |\textbf{I}| = 4^{\pi(n)}$ terms in this sum so to make use of this we need to estimate the $S(a)$ carefully. In the next section we derive a bound for $S(a)$ for all $a\in \textbf{A}$.

\section{Estimating $S(a)$}
	We will estimate $S(a)$ for all $a\in \textbf{A}$. Recall that $s\in \textbf{S}(a)$ if and only if $a_i\mid s_i$ for all $i$. Writing each $s_i = a_ix_i$ we see that every $s$ corresponds to a unique positive solution to the Diophantine equation
		\begin{equation}\label{ndiophantine}
		a_1x_1 + a_2x_2 + a_3x_3 = n
		\end{equation}
	Let $d(a):=\gcd(a_1,a_2,a_3)$. Then \eqref{ndiophantine} has integer solutions if and only if $d(a)\mid n$. We will determine the general solution, then use geometry to estimate $S(a)$. 

	We introduce some notation. For the rest of this section let $a\in \textbf{A}$ such that $d(a)\mid n$, and for convenience let $d:=d(a)$.
	For $i\not=j\not=k$ let
		$$
		g_i = \gcd\left( \frac{a_j}{d}, \frac{a_k}{d} \right)
		$$
	Since the $a_i$ are squarefree we see that $g_i$ is the product of all primes dividing both $a_j,a_k$ but not $a_i$ or $d$. It follows that
		\begin{lemma} \label{gscoprime}
			\begin{align*}
			\gcd(g_i,g_j) &= \gcd(g_i,d) = 1
			\\
			a_i &= dg_jg_k
			\end{align*}
		\end{lemma}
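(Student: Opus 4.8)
The plan is to prove both claims by exploiting the squarefreeness of the $a_i$ together with the defining property of $\textbf{A}$, namely that every element is a componentwise $\lcm$ of generators of the form $(p,p,1),(p,1,p),(1,p,p)$. First I would recast everything in terms of which primes appear in which components. Since each $a_i$ is squarefree, it is determined by the set of primes dividing it; write $P_i$ for the set of primes $p$ with $p\mid a_i$. The crucial structural fact, which follows from $\textbf{A}\cong \textbf{I}=\otimes_p \textbf{I}_p$ (Lemma \ref{posetiso}) and the shape of $\textbf{I}_p$, is that for every prime $p\leq n$ the number $\kappa_p(a)$ of components divisible by $p$ is $0$, $2$, or $3$ — it is never exactly $1$. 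In other words, no prime divides exactly one of $a_1,a_2,a_3$: each prime either divides none of them, exactly two of them, or all three.

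From this trichotomy the lemma falls out by a clean partition of the primes. A prime $p$ divides $d=\gcd(a_1,a_2,a_3)$ exactly when $\kappa_p(a)=3$. A prime $p$ divides $g_i=\gcd(a_j/d,a_k/d)$ exactly when $p$ divides both $a_j$ and $a_k$ but not $d$ and not $a_i$ — by the trichotomy this is precisely the case $\kappa_p(a)=2$ with $p\nmid a_i$, i.e. $p\in P_j\cap P_k\setminus P_i$. So the three sets of primes $\{p: p\mid g_1\}$, $\{p: p\mid g_2\}$, $\{p: p\mid g_3\}$, and $\{p: p\mid d\}$ are pairwise disjoint (and any prime dividing some $a_i$ lands in exactly one of them, while primes with $\kappa_p(a)=0$ land in none). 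Pairwise disjointness of these prime sets immediately gives $\gcd(g_i,g_j)=1$ for $i\neq j$ and $\gcd(g_i,d)=1$, which is the first line of the lemma. For the second line, $a_i=dg_jg_k$: a prime $p$ divides the right-hand side iff it divides $d$ (i.e. $\kappa_p(a)=3$, so $p\mid a_i$) or divides $g_j$ (i.e. $p\in P_k\cap P_i\setminus P_j$, so $p\mid a_i$) or divides $g_k$ (i.e. $p\in P_i\cap P_j\setminus P_k$, so $p\mid a_i$); conversely any prime dividing $a_i$ has $\kappa_p(a)\in\{2,3\}$ and, by the trichotomy, falls into exactly one of these three cases. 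Since everything in sight is squarefree, equality of the radicals gives equality of the numbers.

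Concretely I would organize the write-up as: (1) state and justify the trichotomy $\kappa_p(a)\in\{0,2,3\}$ from the Hasse diagram of $\textbf{I}_p$ (Figure \ref{Ip}) — note $(p,p,p)=\lcm$ of any two of the generators, so the only attainable ``profiles'' of which components $p$ divides are $\emptyset$, the three $2$-element sets, and everything; (2) read off the prime-set descriptions of $d$ and the $g_i$ and observe they are pairwise disjoint; (3) conclude the coprimality statements and the factorization $a_i=dg_jg_k$ by comparing prime factorizations. The only real subtlety — the ``main obstacle,'' though it is mild — is being careful that $g_i$ is exactly the product of primes in $P_j\cap P_k\setminus P_i$ and nothing more: a priori $\gcd(a_j/d,a_k/d)$ could pick up a prime dividing $a_i$ if some prime appeared to multiplicity, but squarefreeness rules this out since dividing by $d$ removes precisely one copy of each common prime. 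Once that is nailed down, the rest is bookkeeping with disjoint finite sets of primes.
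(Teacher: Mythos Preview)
Your proposal is correct and matches the paper's approach: the paper gives no formal proof, only the one-sentence observation preceding the lemma that ``$g_i$ is the product of all primes dividing both $a_j,a_k$ but not $a_i$ or $d$,'' which is precisely your prime-by-prime partition argument. You are right to make explicit the trichotomy $\kappa_p(a)\in\{0,2,3\}$ coming from the structure of $\textbf{I}_p$ (Lemma~\ref{posetiso}); this is genuinely needed for the factorization $a_i=dg_jg_k$ (to rule out a prime dividing $a_i$ alone), and the paper leaves it implicit.
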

	Let $m:=n/d$. Then \eqref{ndiophantine} becomes
		\begin{align}\label{mdiophantine}
		g_2g_3x_1 + g_1g_3x_2 + g_1g_2x_3 = m
		\end{align}
	For any $m\in \Z$ let $\textbf{X}_m\subset \Z^3$ be the set of integer solutions to \eqref{mdiophantine}. Since we already removed the $\gcd$ of the coefficients, we know that $\textbf{X}_m$ is nonempty. For for any $v\in \textbf{X}_m$ we may write
		\begin{equation}
		\textbf{X}_m = v + \textbf{X}_0 \label{coset}
		\end{equation}
	Geometrically, $\textbf{X}_m$ is a translation of the lattice $\textbf{X}_0$; we will investigate the latter. For any $x\in \textbf{X}_0$ we have
		\begin{align}
		g_2g_3x_1 + g_1g_3x_2 + g_1g_2x_3 
			&= 0 \label{x0}
			\\
		g_2g_3x_1 
			&=-g_1(g_3x_2 + g_2x_3) \label{x0divis}
		\end{align}
	The right side of \eqref{x0divis} is divisible by $g_1$ so the left side is as well. By \eqref{gscoprime} we know that the $g_i$ are pairwise coprime, so we must have $g_1\mid x_1$. The same reasoning shows that $g_2,g_3$ divide $x_2,x_3$ respectively. Factoring these out to write $x_i = g_i y_i$ for some $y_i$, we have
		\begin{align}
		g_2g_3(g_1y_1) + g_1g_3(g_2y_2) + g_1g_2(g_3y_3) 
			&= 0 \nonumber
			\\
		(g_1g_2g_3)(y_1+y_2+y_3 )
			&= 0 \nonumber
			\\
		y_1+y_2+y_3
			&= 0 \label{ys}
		\end{align}
	Thus each $x\in \textbf{X}_0$ corresponds to a unique solution to \eqref{ys}. It's easy to see that the reverse holds as well; given any solution to \eqref{ys} we get a unique element of $\textbf{X}_0$ by the map $y_i \mapsto g_iy_i$. We can write the solution set to \eqref{ys} as
		$$
		\begin{pmatrix}
		1 & 0
		\\
		0 & 1
		\\
		-1 & -1
		\end{pmatrix} \Z^2
		$$
	Thus we may express $\textbf{X}_m$ as
		$$
		\textbf{X}_m = v + \begin{pmatrix}
		g_1 & 0
		\\
		0 & g_2
		\\
		-g_3 & -g_3
		\end{pmatrix} \Z^2
		$$
	Now we are ready to estimate $S(a)$. Let $\textbf{P}_m\subset \R^3$ be the plane determined by $\textbf{X}_m$. Let $\Delta_m\subset \textbf{P}_m$ be the interior of the triangle with vertices
		$$
		(m/(g_2g_3),0,0),(0,m/(g_1g_3),0),(0,0,m/(g_1g_2))
		$$
	Then
		$$
		S(a) = |\Delta_m \cap \textbf{X}_m|
		$$
	Let $F$ be the scaled projection map given by 
		\begin{align*}
		F&:=
		\begin{pmatrix}
		1/g_1 & 0 & 0
		\\
		0 & 1/g_2 & 0
		\end{pmatrix}
		\end{align*}
	Then since the restriction of $F$ to $\textbf{P}_m$ is a bijection to $\R^2$, we see that
		$$
		S(a) = |F \Delta_m \cap F\textbf{X}_m|
		$$
	Applying $F$ gives
		$$
		F \textbf{X}_m
		=
		F v + 
		\Z^2
		$$
	So $F \textbf{X}_m$ is a translate of $\Z^2$. On the other hand, $F \Delta_m$ is $m/(g_1g_2g_3)\Delta$ where $\Delta$ is the interior of the triangle with vertices
		$$
		(1,0),(0,1),(0,0)
		$$
	For all $\lambda > 0 $ let $L(\lambda)$ be the number of points of $F \textbf{X}_m$ within $\lambda \Delta$, so that
		$$
		L(\lambda):= |\lambda \Delta \cap F \textbf{X}_m|
		$$
	Then $S(a) = L(m/(g_1g_2g_3))$. The function $L(\lambda)$ is determined by the coset $F\textbf{X}_m = F v + \Z^2$, but should be close to the area $\lambda^2/2$. We will show that
		\begin{lemma}
		There exists some integer $t\geq 0$ with $|\lambda-t|\leq 2$ such that
			$$
			L(\lambda) = \binom{t}{2}
			$$
		\end{lemma}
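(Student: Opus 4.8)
The plan is to recast $L(\lambda)$ as the number of points of a translated copy of $\Z^2$ lying in a right triangle, and then evaluate that count by an elementary stars-and-bars argument. First I would normalize the lattice: by the computation just before the lemma, $F\textbf{X}_m = Fv + \Z^2$, so writing $Fv = (w_1,w_2)$ and absorbing the integer parts of $w_1,w_2$ into $\Z^2$, I may assume $w_1,w_2\in[0,1)$. Since $\lambda\Delta$ is the open triangle $\{(x,y) : x>0,\ y>0,\ x+y<\lambda\}$, the quantity $L(\lambda)$ is exactly the number of $(i,j)\in\Z^2$ with $i>-w_1$, $j>-w_2$ and $i+j<\lambda-w_1-w_2$.

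Next I would strip off the offset. Let $i_0$ and $j_0$ be the least integers strictly exceeding $-w_1$ and $-w_2$ respectively (so $i_0,j_0\in\{0,1\}$), and let $c$ be the greatest integer strictly less than $\lambda-w_1-w_2$. The three ``defect'' quantities $\delta_1 := i_0+w_1$, $\delta_2 := j_0+w_2$ and $\delta_3 := (\lambda-w_1-w_2)-c$ all lie in $(0,1]$. Substituting $(i,j)=(i_0+i',\,j_0+j')$ identifies the triple of conditions above with the condition $i'+j'\le M$ on nonnegative integers $(i',j')$, where $M := c-i_0-j_0$. I would then set $t := M+2$ and verify the three required properties.

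The first is $t\ge0$: since $\lambda-w_1-w_2-(i_0+j_0-2)=\lambda-\delta_1-\delta_2+2\ge\lambda>0$, the integer $i_0+j_0-2$ lies strictly below $\lambda-w_1-w_2$, hence $c\ge i_0+j_0-2$, i.e. $M\ge-2$ and $t\ge0$. The second is $L(\lambda)=\binom{t}{2}$: the number of nonnegative integer pairs with $i'+j'\le M$ is $\binom{M+2}{2}=\binom{t}{2}$, an identity that remains correct in the two boundary cases $M\in\{-1,-2\}$ because both sides are then $0$. The third is $|\lambda-t|\le2$: from $t=M+2$ one gets $\lambda-t=\delta_1+\delta_2+\delta_3-2$, which lies in $(-2,1]$ since each $\delta_\ell\in(0,1]$.

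I do not expect a genuine obstacle here; the argument is short. The one place demanding care is the bookkeeping of the strict versus non-strict inequalities when defining $i_0,j_0,c$, and in particular the verification that $M\ge-2$ (equivalently $t\ge0$): this is what makes the stars-and-bars formula applicable on the nose and simultaneously keeps $t$ in the allowed range, and it is the step where the strict positivity of $\lambda$ is actually used.
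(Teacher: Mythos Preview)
Your proof is correct and follows essentially the same approach as the paper: normalize the translated lattice to $\Z^2$ plus an offset, identify $L(\lambda)$ with the count of lattice points in an open right triangle, and evaluate that count as a triangular number $\binom{t}{2}$. The paper chooses a base point $q\in(0,1]^2$ and case-splits on whether $\lambda\le q_1+q_2$, while your defect variables $\delta_1,\delta_2,\delta_3\in(0,1]$ absorb that case distinction and handle the boundary situations $M\in\{-1,-2\}$ uniformly; apart from this bookkeeping the two arguments coincide (your $(\delta_1,\delta_2)$ is the paper's $q$, and your $M$ is the paper's $s$).
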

			\begin{proof}
			Let $\textbf{Q}_m$ be the set of points of $F\textbf{X}_m$ which have positive coordinates.	Let $q=(q_1,q_2)$ be the unique element of $\textbf{Q}_m$ which lies in the half-open square $(0,1]^2$, so $\textbf{Q}_m = q + \N^2$. Then
				\begin{align*}
				L(\lambda) 
					&= |\lambda \Delta \cap \textbf{Q}_m|
				\end{align*}
			Let $\lambda_q:= q_1 + q_2$. If $\lambda \leq \lambda_q$ we have $L(\lambda) = 0 = \binom{1}{2}$, so the lemma holds in this case. Now suppose that $\lambda > \lambda_q$, and let $s$ be the largest natural number for which $\lambda_q + s < \lambda$. Then for each natural number $0\leq s' \leq s$, there are $s'+1$ points of $\lambda \Delta \cap \textbf{Q}_m$ on the line 
				$$
				z_1 + z_2 = \lambda_q + s'
				$$
			Moreover every such point is on a line of this form. Thus
				\begin{align*}
				L(\lambda)
					&= \sum_{0\leq s' \leq s} s' + 1
					\\
					&= \binom{s+2}{2}
				\end{align*}
			To complete the proof we set $t:=s+2$ and use the fact that 
				$$
				0 < \lambda_q \leq \lambda-s \leq \lambda_q + 1\leq 3
				$$
			to obtain $|\lambda - t| \leq 2$.
			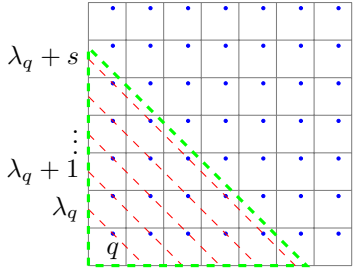
\begin{figure}[ht]
				\centering
				\begin{tikzpicture}[scale=.5]
				\tikzmath{
				\maxcoord = 7;
				\maxlincoord = \maxcoord -3;
				\lam = \maxcoord - 2 + .6;
				\vx = -.35;
				\vy = -.15;
				\slam = 2+\vx+\vy;
				}
				\draw[step=1, gray, thin] (0,0) grid (\maxcoord,\maxcoord);
				\foreach \x in {1,...,\maxcoord}
					\foreach \y in {1,...,\maxcoord}
						{
						\filldraw[blue] (\x+\vx,\y+\vy) circle [radius=1.2pt];
						}
				\foreach \x in {0,...,\maxlincoord}
					{
					\draw[red,dashed] (0,\slam+\x) -- (\slam+\x,0);
					}
				\draw (0,\slam) node[anchor=east] {$\lambda_q$};
				\draw (0,\slam+1) node[anchor=east] {$\lambda_q+1$};
				\draw (0,\slam+2) node[anchor=east] {$\vdots$};
				\draw (0,\slam+4) node[anchor=east] {$\lambda_q+s$};
				\draw[green,dashed,very thick] (0,0) -- (0,\slam+4.3) -- (\slam+4.3,0) -- (0,0);
				\draw (1+\vx,1+\vy) node[anchor=north] {$q$};
				\end{tikzpicture}
				\caption{$\lambda \Delta$ is depicted in green,
				the lattice $\textbf{Q}_m$ in blue. 
				}
				\label{lambdatriangles}
			\end{figure}
			\end{proof}
	Let
		$$
		l(a) := g_1g_2g_3
		$$
	Then we can summarize the results of this section in the following way.
		\begin{theorem}\label{Sestimate}
		Let $a\in \textbf{A}$ and let $d(a),l(a)$ be defined as above. If $d(a)\nmid n$ then $S(a) = 0$. If $d(a)\mid n$, then there exists some real number $|r(a)|\leq 2$ such that
		$$
		S(a) = \frac{1}{2}\left( \frac{n}{d(a)l(a)} + r(a) \right)\left( \frac{n}{d(a)l(a)} + r(a) - 1 \right)
		$$
		\end{theorem}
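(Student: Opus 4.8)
The plan is simply to collect the facts already established in this section. First I would dispose of the case $d(a)\nmid n$: here $d(a)=\gcd(a_1,a_2,a_3)$ divides the left side of \eqref{ndiophantine} but not $n$, so \eqref{ndiophantine} has no integer solutions at all, whence $\textbf{S}(a)=\emptyset$ and $S(a)=0$, which is the asserted value.

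For the main case $d(a)\mid n$, I would recall the chain of identifications made above. Writing $s_i=a_ix_i$ puts $\textbf{S}(a)$ in bijection with the positive solutions of \eqref{ndiophantine}; dividing through by $d:=d(a)$ and invoking Lemma \ref{gscoprime} turns this into the positive solutions of \eqref{mdiophantine} with $m:=n/d$, i.e. the points of $\Delta_m\cap\textbf{X}_m$; and the scaled projection $F$ restricts to a bijection of the plane $\textbf{P}_m$ onto $\R^2$ that carries $\textbf{X}_m$ to a translate of $\Z^2$ and the open triangle $\Delta_m$ onto $\tfrac{m}{g_1g_2g_3}\Delta$, so that $S(a)=L\!\left(\tfrac{m}{g_1g_2g_3}\right)$. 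Since $l(a)=g_1g_2g_3$ and $m=n/d(a)$, the argument of $L$ is exactly $\lambda:=\dfrac{n}{d(a)l(a)}$, and hence $S(a)=L(\lambda)$.

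Finally I would apply the lemma just proved: there is an integer $t\ge 0$ with $|\lambda-t|\le 2$ and $L(\lambda)=\binom{t}{2}$. Setting $r(a):=t-\lambda$ gives $|r(a)|\le 2$ and $t=\lambda+r(a)=\dfrac{n}{d(a)l(a)}+r(a)$, so
$$
S(a)=\binom{t}{2}=\frac{t(t-1)}{2}=\frac{1}{2}\left(\frac{n}{d(a)l(a)}+r(a)\right)\left(\frac{n}{d(a)l(a)}+r(a)-1\right),
$$
which is precisely the claim.

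I do not expect any real obstacle here: all of the substance — the lattice-point count $L(\lambda)=\binom{t}{2}$ with $|\lambda-t|\le 2$, and the geometric reduction $S(a)=L(\lambda)$ — is already in hand, so what remains is bookkeeping. The only points that merit a line of care are that $F$ genuinely restricts to a bijection on $\textbf{P}_m$ (so that no lattice points of $\textbf{X}_m$ are created or lost), that $\Delta_m$ maps exactly onto $\tfrac{m}{g_1g_2g_3}\Delta$, and that each $g_i$ is a positive integer so that every division above is legitimate; none of these is difficult.
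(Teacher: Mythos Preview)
Your proposal is correct and matches the paper exactly: the paper states this theorem as a summary of Section~4 without a separate proof, and your write-up is precisely the bookkeeping needed---identify $S(a)=L(\lambda)$ with $\lambda=n/(d(a)l(a))$, invoke the lemma to get $L(\lambda)=\binom{t}{2}$ with $|\lambda-t|\le 2$, and set $r(a):=t-\lambda$.
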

		
\section{Computing $T$}
	We will now evaluate \ref{explicit}. We have
		\begin{equation*}
		T = \sum_{a\in \textbf{A}} \prod_{p\leq n} (-1)^{\kappa_p(a)-1}(\kappa_p(a)-1) S(a)
		\end{equation*}
	Using the fact that $S(a)=0$ unless $d(a)\mid n$ and substituting the result of Theorem \ref{Sestimate} we have
		\begin{align*}
		T 
			&= \sum_{\substack{a\in \textbf{A}\\ d(a)\mid n}} \prod_{p\leq n} (-1)^{\kappa_p(a)-1}(\kappa_p(a)-1) S(a)
			\\
			&= \sum_{\substack{a\in \textbf{A}\\ d(a)\mid n}} \prod_{p\leq n} (-1)^{\kappa_p(a)-1}(\kappa_p(a)-1) \frac{1}{2}\left( \frac{n}{d(a)l(a)} + r(a) \right)\left( \frac{n}{d(a)l(a)} + r(a) - 1 \right)
			\\
			&= M + E_1 + E_2
		\end{align*}
	Where
		\begin{align}
		M
			&:= \frac{n^2}{2}\sum_{\substack{a\in \textbf{A}\\ d(a)\mid n}} \prod_{p\leq n} (-1)^{\kappa_p(a)-1}(\kappa_p(a)-1)\left( \frac{1}{d(a)l(a)}\right)^2
			\label{Mdef}
			\\
		E_1
			&:= \frac{n}{2}\sum_{\substack{a\in \textbf{A}\\ d(a)\mid n}} \prod_{p\leq n} (-1)^{\kappa_p(a)-1}(\kappa_p(a)-1)\frac{2r(a)-1}{d(a)l(a)}
			\label{E1def}
			\\
		E_2
			&:=\sum_{\substack{a\in \textbf{A}\\ d(a)\mid n}} \prod_{p\leq n} (-1)^{\kappa_p(a)-1}(\kappa_p(a)-1)r(a)(r(a)-1)
			\label{E2def}
		\end{align}
	We will evaluate each of these sums separately, beginning with the main term $M$.

	\subsection{The Main Term}
	We have
		\begin{align}
		M
			&:= \frac{n^2}{2}\sum_{\substack{a\in \textbf{A}\\ d(a)\mid n}} \prod_{p\leq n} (-1)^{\kappa_p(a)-1}(\kappa_p(a)-1)\left( \frac{1}{d(a)l(a)}\right)^2
			\label{M}
		\end{align}
	Recall that $d(a)$ is the product of all primes which divide all three of the $a_i$ while $l(a)$ is the product of the primes dividing exactly two of the $a_i$. This lets us factor each term in the sum as
		\begin{align}
		\prod_{p\leq n} (-1)^{\kappa_p(a)-1}(\kappa_p(a)-1)\left( \frac{1}{d(a)l(a)}\right)^2
			&= \left(\prod_{p \mid d(a)}\frac{2}{p^2}\right) \left(\prod_{q \mid l(a)}-\frac{1}{q^2}\right)
			\label{Mtermprod}
		\end{align}
	Let $\textbf{D}$ be the set of squarefree numbers whose prime factors are all less than or equal to $p$. Then for any $a\in \textbf{A}$ we see that $d(a),l(a)\in \textbf{D}$. Moreover every term in \eqref{M} corresponds to a paird $(d,l)\in \textbf{D}^2$ such that $d\mid n$ and $\gcd(d,l) = 1$.	Using this and substituting \eqref{Mtermprod} into \eqref{M} we obtain
		\begin{align*}
		M
			&= \frac{n^2}{2}
			\sum_{\substack{(d,l) \in \textbf{D}^2 \\ d \mid n \\ \gcd(d,l) = 1}} 
			\left(\prod_{p \mid d}\frac{2}{p^2}\right) 
			\left(\prod_{q \mid l}-\frac{1}{q^2}\right)
			\sum_{\substack{a\in \textbf{A} \\ d(a)=d\\ l(a) = l}} 
			1
		\end{align*}
	If $d(a) = d$ and $l(a) = l$ then every prime dividing $d$ divides all of the $a_i$, while primes dividing $l$ divide exactly two of the $a_i$. There are $3$ ways that this can happen for each prime $p \mid l$, hence for each term in the sum we obtain
		$$
		\sum_{\substack{a\in \textbf{A} \\ d(a)=d\\ l(a) = l}} 
				1
			= \prod_{q \mid l}3
		$$
	Therefore
		\begin{align}
		M
			&= \frac{n^2}{2}
			\sum_{\substack{(d,l) \in \textbf{D}^2 \\ d \mid n \\ \gcd(d,l) = 1}} 
			\left(\prod_{p \mid d}\frac{2}{p^2}\right) 
			\left(\prod_{q \mid l}-\frac{3}{q^2}\right)
			\nonumber
			\\
			&= \frac{n^2}{2}
			\sum_{\substack{d\in \textbf{D} \\ d \mid n}} 
			\left(\prod_{p \mid d}\frac{2}{p^2}\right) 
			\sum_{\substack{l\in \textbf{D} \\ \gcd(d,l) = 1}} 
			\left(\prod_{q \mid l}-\frac{3}{q^2}\right)
			\label{Msplit}
		\end{align}
	The rightmost sum can be factored:
		\begin{align*}
		\sum_{\substack{l\in \textbf{D} \\ \gcd(d,l) = 1}} 
		\left(\prod_{q \mid l}-\frac{3}{q^2}\right)
			&= \prod_{\substack{q\leq n \\ q\nmid d}}\left(1-\frac{3}{q^2}\right)
			\\
			&=
			\prod_{\substack{q' \mid n \\ q'\nmid d}}\left(1-\frac{3}{q'^2}\right)
			\prod_{\substack{q\leq n \\ q\nmid n}}\left(1-\frac{3}{q^2}\right)
		\end{align*}
	Plugging into \eqref{Msplit} and pulling out the common factor $\prod_{\substack{q\leq n \\ q\nmid n}}\left(1-\frac{3}{q^2}\right)$ we have 
		\begin{align*}
		M
		&=
		\frac{n^2}{2}
		\sum_{\substack{d\in \textbf{D} \\ d \mid n}} 
		\left(\prod_{p \mid d}\frac{2}{p^2}\right) 
		\prod_{\substack{q' \mid n \\ q'\nmid d}}\left(1-\frac{3}{q'^2}\right)
		\prod_{\substack{q\leq n \\ q\nmid n}}\left(1-\frac{3}{q^2}\right)
		\\
		% \frac{M}{\frac{n^2}{2}\prod_{\substack{p\leq n \\ p\nmid n}}\left(1-\frac{3}{p^2}\right) }
		&=
		\prod_{\substack{q\leq n \\ q\nmid n}}\left(1-\frac{3}{q^2}\right) 
		\frac{n^2}{2}
		\sum_{\substack{d\in \textbf{D} \\ d \mid n}} 
		\left(\prod_{p \mid d}\frac{2}{p^2}\right) 
		\prod_{\substack{q' \mid n \\ q'\nmid d}}\left(1-\frac{3}{q'^2}\right)
		\end{align*}
	Since $d\mid n$ we see that this final sum can also be factored in terms of the prime divisors of $n$, giving
		\begin{align*}
		\sum_{\substack{d\in \textbf{D} \\ d \mid n}} 
			\left(
				\prod_{p \mid d}\frac{2}{p^2}
			\right) 
			\prod_{\substack{q' \mid n \\ q'\nmid d}}\left(1-\frac{3}{q'^2}\right)
		&=
		\prod_{\substack{p \mid n}}\left(1 - \frac{3}{p^2}+\frac{2}{p^2} \right)
		\\
		&=
		\prod_{\substack{p \mid n}}\left(1 - \frac{1}{p^2}\right)
		\end{align*}
	Thus
		$$
		M = \frac{n^2}{2}\prod_{\substack{p\leq n \\ p\nmid n}}\left(1-\frac{3}{p^2}\right)\prod_{\substack{p \mid n}}\left(1 - \frac{1}{p^2}\right)
		$$
	To get the main term of Theorem \ref{maintheorem}, notice that
		\begin{align*}
		\prod_{p > n} \left(1-\frac{3}{p^2}\right)M
			&= f(n)\frac{n^2}{2}
		\end{align*}
	This product is 
		$$
		\prod_{p > n} \left(1-\frac{3}{p^2}\right)
			= 1 + O(1/n)
		$$
	Hence
		\begin{align*}
		M
			&= f(n)\frac{n^2}{2} + O(n)
		\end{align*}

	\subsection{First Error Term}
	Now we turn to the first error term, $E_1$. Using the fact that $|r(a)|\leq 2$ we have
		\begin{align}
		|E_1|
			&= \Big|\frac{n}{2}\sum_{\substack{a\in \textbf{A}\\ d(a)\mid n}} \prod_{p\leq n} (-1)^{\kappa_p(a)-1}(\kappa_p(a)-1)\frac{2r(a)-1}{d(a)l(a)} \Big|
			\nonumber
			\\
			&\leq \frac{5n}{2}\sum_{\substack{a\in \textbf{A}\\ d(a)\mid n}} \prod_{p\leq n} |\kappa_p(a)-1|\frac{1}{d(a)l(a)} 
			\label{E1abs}
		\end{align}
	This sum resembles the sum for $M$, and can be factored in much the same way. We omit the details, but it is easily shown that
		\begin{align*}
		\sum_{\substack{a\in \textbf{A}\\ d(a)\mid n}} \prod_{p\leq n} |\kappa_p(a)-1|\frac{1}{d(a)l(a)} 
			&=
			\prod_{\substack{q\leq n \\ q\nmid n}}\left(1+\frac{3}{q}\right)\prod_{\substack{p \mid n}}\left(1 + \frac{5}{p}\right)
			\\
			&=O(\log^5 n)
		\end{align*}
	This last bound follows from Merten's Theorem. Hence
		$$
		|E_1| = O\left( n\log^5 n \right)
		$$

	\subsection{Second Error Term}
	Now we estimate the remaining error. Let $\textbf{A}_0\subset \textbf{A}$ be the set of all $a$ for which $S(a)=0$ and let $\textbf{A}_1 = \textbf{A}\setminus \textbf{A}_0$. Using this we can break $E_2$ into two sums
		\begin{align*}
		E_2
			&= F_0 + F_1
		\end{align*}
	Where each $F_i$ is the sum of terms coming from $\textbf{A}_i$
		\begin{align*}
		F_i
			&:=
			\sum_{\substack{a\in \textbf{A}_i\\ d(a)\mid n}} \prod_{p\leq n} (-1)^{\kappa_p(a)-1}(\kappa_p(a)-1)r(a)(r(a)-1)
		\end{align*}
	We'll estimate $F_0$ first.	If $a$ appears in $F_0$ then $S(a) = 0$ and $d(a) \mid n$ so by Theorem \ref{Sestimate} we have
		$$
		\left( \frac{n}{d(a)l(a)} + r(a) \right)\left( \frac{n}{d(a)l(a)} + r(a) - 1 \right) = 0
		$$
	We see that one of $r(a), r(a)-1$ must equal $-\frac{n}{d(a)l(a)}$. The other is bounded above by $3$, so we see that
		$$
		|r(a)(r(a)-1)|\leq 3 \frac{n}{d(a)l(a)}
		$$
	Thus
		\begin{align*}
		|F_0 |
			&\leq \sum_{\substack{a\in \textbf{A}_0\\ d(a)\mid n}} \prod_{p\leq n} |\kappa_p(a)-1||r(a)(r(a)-1)|
			\\
			&\leq 3\sum_{\substack{a\in \textbf{A}_0\\ d(a)\mid n}} \prod_{p\leq n} |\kappa_p(a)-1|\frac{n}{d(a)l(a)}
			\\
			&\leq 3\sum_{\substack{a\in \textbf{A}\\ d(a)\mid n}} \prod_{p\leq n} |\kappa_p(a)-1|\frac{n}{d(a)l(a)}
		\end{align*}
	This is the same sum used to bound $E_1$, so we see that
		$$
		|F_0| = O(n\log^5n)
		$$
	The bulk of the error comes from $|F_1|$. Using the bound $|r(a)(r(a)-1)|\leq 6$ we have
		\begin{align}
		|F_1|
			&\leq 6\sum_{\substack{a\in \textbf{A}_1\\ d(a)\mid n}} \prod_{p\leq n}|\kappa_p(a)-1|
			\label{F1basicbound}
		\end{align}
	For a positive natural number $x$ let $\omega(x)$ be the number of distinct prime factors. Then
		\begin{align*}
		\prod_{p\leq n}|\kappa_p(a)-1|
		&= 2^{\omega(d(a))}
		\\
		&\leq 2^{\omega(n)}
		\\
		&\leq n^{O(1/\log\log n)}
		\end{align*}
	The last inequality follows from using the Prime Number Theorem to obtain the bound $\omega(x) = O\left( \frac{\log x}{\log \log x} \right)$. Plugging this into \eqref{F1basicbound} we obtain
		\begin{align}
		|F_1|
			&\leq n^{O(1/\log\log n)} |\textbf{A}_1|
		\end{align}
	We give a rough bound for $|\textbf{A}_1|$. If $a\in \textbf{A}_1$, then $a_1+a_2+a_3 \leq n$ which implies that $a_1a_2a_3\leq (n/3)^{3}$. This implies that $l(a) \leq (n/3)^{3/2}$. There are at most $3^{\omega(l)}$ elements $a$ for which $l(a) = l$. Hence
		\begin{align*}
		|\textbf{A}_1| 
			&\leq \sum_{l \leq (n/3)^{3/2}}3^{\omega(l)}
			\\
			&\leq \sum_{l \leq (n/3)^{3/2}}3^{\omega(l)}
			\\
			&\leq \sum_{l \leq (n/3)^{3/2}}l^{O(1/\log\log n)}
			\\
			&\leq n^{3/2 + O(1/\log\log n)}
		\end{align*}

\bibliography{newreferences}{}
\bibliographystyle{plain}

\end{document}